\theoremstyle{plain}
\newtheorem{theorem}{Theorem}
\newtheorem{theorem*}{Theorem}
\newtheorem{proposition}{Proposition}
\newtheorem{lemma}{Lemma}
\theoremstyle{remark}
\theoremstyle{definition}
\newcommand{\reft}[1]{Theorem \ref{#1}}
\newcommand{\refp}[1]{Proposition \ref{#1}}
\newcommand{\refl}[1]{Lemma \ref{#1}}
\newcommand{\refs}[1]{Section \ref{#1}}
\newcommand{\refeq}[1]{Eq. (\ref{#1})}
\def\sig{\sigma}
\def\vsig{\zeta}
\def\tet{\xi}
\def\k{F}
\def\C{\mathbb{C}}
\def\N{\mathbb{N}}
\def\R{\mathbb{R}}
\def\SS{\mathbb{S}}
\def\cx{\C^{\times}}
\def\ka{\k^{+}}
\def\kc{\k^{\circ}}
\def\VC{\mathcal{V}}
\def\WC{\mathcal{W}}
\def\fru{\mathfrak{u}}
\def\G{\mathbf{G}}
\def\hG{\widehat{G}}
\def\gs{g^{\sig}}
\def\ks{\k^{\sig}}
\def\Gs{C_{G}(\sig)}
\def\Hs{C_{H}(\sig)}
\def\Ms{C_{M}(\sig)}
\def\Ns{C_{N}(\sig)}
\def\Js{C_{J}(\sig)}
\def\CG{\C[G]}
\def\CH{\C[H]}
\def\CGs{\C[\Gs]}
\def\CMs{\C[\Ms]}
\def\CNs{\C[\Ns]}
\newcommand{\map}[3]{#1 \colon #2 \to #3}
\def\inv{^{-1}}
\def\x{\times}
\def\ox{\otimes}
\def\Res{\operatorname{Res}}
\def\Ind{\operatorname{Ind}}
\def\CInd{\operatorname{c-Ind}}
\def\spec{\operatorname{Spec}}
\def\ab{\mathrm{ab}}
\def\id{\mathrm{id}}
\begin{document}

\title[]{Smooth representations of involutive algebra groups over non-archimedean local fields}

\author[]{Carlos A. M. Andr\'e}

\author[]{Jo\~ ao Dias}

\thanks{This research was made within the activities of the Group for Linear, Algebraic and Combinatorial Structures of the Center for Functional Analysis, Linear Structures and Applications (University of Lisbon, Portugal), and was partially supported by the Portuguese Science Foundation (FCT) through the Strategic Project UID/MAT/04721/2013. The second author was partially supported by the Lisbon Mathematics PhD program (funded by the Portuguese Science Foundation). This work is part of the second author Ph.D. thesis.}

\address[C. A. M. Andr\'e]{Centro de An\'alise Funcional, Estruturas Lineares e Aplica\c c\~oes (Grupo de Estruturas Lineares e Combinat\'orias) \\ Departamento de Matem\'atica \\ Faculdade de Ci\^encias da Universidade de Lisboa \\ Campo Grande, Edif\'\i cio C6, Piso 2 \\ 1749-016 Lisboa \\ Portugal}
\email{caandre@ciencias.ulisboa.pt}

\address[J. Dias]{Centro de An\'alise Funcional, Estruturas Lineares e Aplica\c c\~oes (Grupo de Estruturas Lineares e Combinat\'orias) \\ Departamento de Matem\'atica \\ Faculdade de Ci\^encias da Universidade de Lisboa \\ Campo Grande, Edif\'\i cio C6, Piso 2 \\ 1749-016 Lisboa \\ Portugal}
\email{joaodias104@gmail.com}

\subjclass[2010]{20G25; 22D12; 22D30}

\date{\today}

\keywords{Algebra with involution; algebra group; fixed point subgroup; smooth representation; induction with compact support; Gutkin's conjecture}

\begin{abstract}
An algebra group over a field $\k$ is a group of the form $G = 1+J$ where $J$ is a finite-dimensional nilpotent associative $\k$-algebra. A theorem of M. Boyarchenko asserts that, in the case where $\k$ is a non-archimedean local field, every irreducible smooth representation of $G$ is admissible and smoothly induced by a one-dimensional smooth representation of some algebra subgroup of $G$. If $J$ is a nilpotent algebra endowed with an involution $\map{\sig}{J}{J}$, then $\sig$ naturally defines a group automorphism of $G$, and we may consider the fixed point subgroup $\Gs$. Assuming that $F$ has characteristic different from $2$, we extend Boyarchenko's result and show that every irreducible smooth representation of $\Gs$ is admissible and smoothly induced by a one-dimensional smooth representation of a subgroup of the form $\Hs$ where $H$ is an $\sig$-invariant algebra subgroup of $G$. As a particular case, the result holds for maximal unipotent subgroups of the classical Chevalley groups defined over $\k$.
\end{abstract}

\maketitle

\section{Introduction}

Throughout the paper, unless otherwise stated, $\k$ will always denote a local field, that is, a non-discrete locally compact topological field. It is well-known (see \cite[Theorem~II.3]{Weil1974a}) that $\k$ is \textit{self-dual} in the sense that, if we fix a nontrivial unitary character $\map{\tet}{\ka}{\cx}$ of the additive group $\ka$ of $\k$ and define $\map{\tet_{a}}{\ka}{\cx}$ by $\tet_{a}(x) = \tet(ax)$ for all $x \in \k$, then the mapping $a \mapsto \tet_{a}$ defines a topological isomorphism between $\ka$ and its Pontryagin dual $\kc$.

Let $A$ be a finite-dimensional associative $\k$-algebra with identity, and let $J$ be a nilpotent subalgebra of $A$. Then, $G = 1+J$ is a subgroup of the unit group of $A$. Following \cite{Isaacs1995a}, a group $G$ constructed in this way will be referred to as an \textit{algebra group} over $\k$. As a typical example, if $J = \fru_{n}(\k)$ is the $\k$-algebra consisting of all strictly upper triangular $n \x n$ matrices over $\k$, then the corresponding algebra group $G = 1+J$ is isomorphic to the upper unitriangular group $U_{n}(\k)$.

The topology of $\k$ induces naturally a topology on $J$, and hence also a topology on $G = 1+J$ with respect to which $G$ becomes a locally compact and second countable topological group. Moreover, $G$ is unimodular, and every $\k$-subalgebra $L$ of $J$ defines a closed subgroup $H = 1+L$ of $G$. We refer to such a subgroup as an \textit{algebra subgroup} of $G$.

We will always assume that $A$ is equipped with an involution $\map{\sig}{A}{A}$, in the sense that the following conditions are satisfied for all $a, b \in A$:  $\sig(a+b)=\sig(a)+\sig(b)$, $\sig(ab)=\sig(b)\sig(a)$, and $\sig^2=\id_{A}$. The involution $\sig$ is not required to be $\k$-linear. However, we will assume that the field $\k = \k\cdot 1$ is preserved by $\sig$. Then, $\sig$ defines a field automorphism of $\k$ which is either the identity or has order $2$. In any case, we let $\ks$ denote the $\sig$-fixed subfield of $\k$, and consider that $A$ is a finite-dimensional associative $\ks$-algebra. If $J$ is a $\sig$-invariant nilpotent subalgebra of $A$, then the cyclic group $\langle \sig \rangle$ acts on the algebra group $G = 1+J$ as a group of automorphisms by means of $\gs = \sig(g\inv)$ for all $g \in G$. We will denote by $\Gs$ the subgroup of $G$ consisting of all $\sig$-fixed elements, that is, $$\Gs = \{g\in G \colon \gs = g\} = \{g \in G \colon \sig(g\inv) = g\}.$$ It is clear that $\Gs$ is a closed subgroup of $G$, but not necessarily an algebra subgroup.

Our main result is about smooth representations of groups of the form $\Gs$. For simplicity of writing, we prefer to use the equivalent notion of smooth modules over the group algebra of $\Gs$. More generally, let $G$ be an arbitrary topological group, and let $\CG$ denote the complex group algebra of $G$. Then, a left $\CG$-module $\VC$ is said to be \textit{smooth} if, for every $v \in \VC$, the stabiliser $G_{v} = \{g \in G \colon gv = v\}$ is an open subgroup of $G$. In the particular case where $\VC$ is one-dimensional, we naturally obtain a group homomorphism $\map{\tet}{G}{\cx}$ with open kernel. We will refer to such a homomorphism as a \textit{smooth character} of $G$, and denote by $G^{\circ}$ the set consisting of all smooth characters of $G$. Throughout the paper, for every smooth character $\tet \in G^{\circ}$, we will denote by $\C_{\tet}$ the one-dimensional smooth $\CG$-module whose underlying vector space is $\C$ and where the $G$-action is given by $g\alpha = \tet(g)\alpha$ for all $g \in G$ and all $\alpha \in \C$. It is well-known that $G^{\circ}$ is a group with respect to the usual multiplication of characters. It should not be confused with the \textit{Pontryagin dual} $\hG$ of $G$ which consists of all unitary characters $\map{\tet}{G}{\cx}$ of $G$. By definition, a \textit{unitary character} of $G$ is a continuous group homomorphism $\map{\tet}{G}{\cx}$ whose image $\tet(G)$ lies inside the unit circle $\SS^{1}$ in $\C$.

We are mainly concerned with the case where $\k$ is a non-archimedean local field. In this situation, every algebra group $G = 1+J$ over $\k$ is an $\ell$-group. Following the terminology of \cite{Bernshtein1976a}, by an \textit{$\ell$-group} we mean a topological group whose identity has a neighbourhood basis consisting of compact open subgroups. In fact, $G = \G(\k)$ is the subgroup consisting of all $\k$-rational points of the connected linear algebraic group $\G = 1+(J\ox_{\k} \overline{\k})$ where $\overline{\k}$ denotes the algebraic closure of $\k$. Furthermore, since $\G$ is unipotent, the group $G = \G(\k)$ is the filtered union of a family of compact open subgroups, in the sense that every element of $G$ is contained in a compact open subgroup, and any two such subgroups are contained in a third such subgroup. Following \cite{Boyarchenko2011a}, we refer to a topological group satisfying this property as an \textit{$\ell_{c}$-group}. We note that every unitary character of an $\ell$-group is a smooth character, but the converse is not necessarily true. However, the equality $G^{\circ} = \hG$ holds for every $\ell_{c}$-group $G$; see \cite[Proposition~1.6]{Bushnell2006a}.

In \cite{Gutkin1974a}, E. Gutkin claimed that, if $G$ is an algebra group over a self-dual local field $\k$, then every irreducible unitary representation of $G$ is induced (in the sense of Mackey) from a unitary character of some algebra subgroup of $G$. In \cite{Isaacs1995a}, I.M. Isaacs presented a counterexample to Gutkin's original proof, but a complete proof was provided by Boyarchenko in \cite{Boyarchenko2011a}. We should mention that, in the case where $\k$ has characteristic zero, Gutkin's theorem may be proved using the method of coadjoint orbits, introduced by A.A. Kirillov in \cite{Kirillov1962a} for unipotent groups over $\R$ or $\C$, and adapted for unipotent groups over a $p$-adic field by C. Moore in \cite{Moore1965a}. In the case where $\k$ is a non-archimedean local field (of arbitrary characteristic), Gutkin's theorem can be formulated in the setting of smooth representations of algebra groups over $\k$. In fact, M. Boyarchenko proved the following result; see \cite[Theorem~1.3]{Boyarchenko2011a}.

\begin{theorem*}[Boyarchenko]
Let $\k$ be a non-archimedean local field, let $G$ be an algebra group over $\k$, and let $\VC$ be an irreducible smooth $\CG$-module. Then, $\VC$ is admissible, there exist an algebra subgroup $H$ of $G$ and a smooth character $\map{\tet}{H}{\cx}$ such that $\VC \cong \CInd^{G}_{H}(\C_{\tet})$. Furthermore, $\VC$ is admissible, and thus $\CInd^{G}_{H}(\C_{\tet}) = \Ind^{G}_{H}(\C_{\tet})$.
\end{theorem*}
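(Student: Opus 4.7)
My plan is to argue by induction on $\dim_{\k} J$, using a Kirillov-style orbit-method argument adapted to smooth representations over a non-archimedean local field, where no exponential map between $J$ and $G$ is in general available. The base case $J = 0$ is trivial, and if $\VC$ is one-dimensional we are done with $H = G$. Otherwise, because $J$ is nilpotent its last nonzero power $J^{k}$ lies in the centre of $J$, so any $\k$-line $I \subseteq J^{k}$ is a two-sided ideal of $J$, and the algebra subgroup $Z_{0} = 1 + I$ is a central closed subgroup of $G$ topologically isomorphic to $(\k,+)$.

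Since $Z_{0}$ is central and $\VC$ is irreducible, a Schur-type argument---exploiting that $G$ is the filtered union of its compact open subgroups, so that on the fixed-vector subspace for any sufficiently small compact open subgroup, $Z_{0}$ acts linearly and smoothly---forces $Z_{0}$ to act on $\VC$ through a single smooth character $\omega \in Z_{0}^{\circ}$. If every choice of line $I \subseteq J^{k}$ produces the trivial character, then $1 + J^{k}$ acts trivially on $\VC$, and $\VC$ descends to an irreducible smooth module over the algebra group $G/(1 + J^{k}) = 1 + J/J^{k}$ of strictly smaller dimension; the inductive hypothesis then produces the required algebra subgroup and character, which pull back to $G$.

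If instead some line $I$ gives a nontrivial $\omega$, I construct a polarisation. The commutator pairing $(x,y) \mapsto xy - yx$ on $J$, composed with a projection onto $I$ along a fixed complement and then with $\omega$, defines an alternating $\k$-bilinear form on $J$. Using the ideal filtration of $J$, one finds a $\k$-subspace $L \subseteq J$ that is a two-sided ideal of $J$, contains $I$, and is maximal isotropic for this form; the associated algebra subgroup $H' = 1 + L$ is then proper in $G$. A smooth character $\omega'$ of $H'$ extending $\omega$ is defined via a $\k$-linear functional on $L$ lifting $\omega|_{I}$, the polarisation condition ensuring that the naive formula respects the group law $(1+x)(1+y) = 1 + x + y + xy$. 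A Mackey-type intertwining computation, together with Frobenius reciprocity for compact induction, identifies $\VC$ with $\CInd^{G}_{H'}(\C_{\omega'})$; applying the inductive hypothesis to $(H', \C_{\omega'})$ and invoking transitivity of $\CInd$ completes the construction.

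The principal difficulties are twofold. First, admissibility of $\VC$ must be established alongside the main conclusion, since both the Schur-type step for $Z_{0}$ and the final identification $\CInd = \Ind$ depend on it; I would maintain admissibility through the induction by showing that compactly induced representations from smooth characters of algebra subgroups form a class closed under the operations used above, with admissibility following from a cocompactness estimate. Second, and more essentially, the character $\omega'$ on $H'$ must be constructed algebraically from linear data, without any Baker--Campbell--Hausdorff or exponential machinery available in positive characteristic; verifying multiplicativity of $\omega'$ directly from the polarisation condition is the technical heart of the argument and where I anticipate the real work lies.
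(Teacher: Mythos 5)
The statement you are proving is Boyarchenko's theorem, which the paper only cites; but the paper's own Theorem~\ref{main1} is a direct analogue and its proof in Section~\ref{proof} is closely modelled on Boyarchenko's argument, so it is fair to compare your route against that.

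Your proposal takes a genuinely different route: a Kirillov-style polarisation argument, aiming to produce in one stroke an isotropic subalgebra of the correct index and a character on it, then identify $\VC$ with the compactly induced module. The paper (following Boyarchenko) instead descends \emph{one dimension at a time}: it picks the smallest $m$ with $G_m=1+J^m$ acting by a character $\vsig$, constructs a one-dimensional extension $L$ of $J^m$ inside $J^{m-1}$, forms the homomorphism $\map{\varphi}{G}{N^{\circ}}$, $\varphi(g)(n)=\vsig([g,n])$, identifies $\ker\varphi=1+S$ with $S$ of codimension exactly one, and invokes Rodier's spectral decomposition theorem to write $\VC\cong\CInd^{G}_{G_0}(\VC_{\tet})$ with $G_0=1+J_0$, $J_0=S+J^2$ a codimension-one subalgebra. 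Since a codimension-one subalgebra of $J$ automatically contains $J^2$ and hence is a two-sided ideal, the step-by-step reduction never has to confront the question of whether a large isotropic subspace can be chosen to be a subalgebra or an ideal. Admissibility also comes for free from Rodier's theorem because the relevant orbit in $N^{\circ}$ is closed.

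The gap in your proposal is precisely the step you single out as the ``technical heart,'' and it is not merely technical: it is where Gutkin's original 1974 argument broke down. You assert that one can find a maximal isotropic subspace $L$ for the alternating form $(x,y)\mapsto\omega(xy-yx)$ that is simultaneously a two-sided ideal of $J$. Nothing guarantees this; two-sided ideals are very rigid, and even the weaker claim that a maximal isotropic \emph{subalgebra} exists is the statement for which Isaacs \cite{Isaacs1995a} produced a counterexample, forcing the entire theory to be redone. Moreover, even granting such an $L$, isotropy of $L$ for the \emph{antisymmetric} form $\lambda([x,y])=0$ does not make $1+x\mapsto\theta(\lambda(x))$ multiplicative on $1+L$: multiplicativity requires $\lambda(xy)=0$ for $x,y\in L$, which is a strictly stronger condition on the \emph{symmetric} part of the pairing. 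In the Lie-theoretic setting this discrepancy is absorbed by the exponential/BCH machinery, which, as you note, is unavailable here; but you have not supplied a replacement. Boyarchenko's codimension-one descent, combined with Rodier's theorem, is exactly the device that sidesteps both obstacles, and without it the polarisation approach as written does not close.
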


Here, and henceforth, if $H$ is a closed subgroup of $G$ and $\WC$ is a smooth $\CH$-module, then we denote by $\Ind^{G}_{H}(\WC)$ the \textit{smoothly induced} $\CG$-module, and by $\CInd^{G}_{H}(\WC)$ the \textit{compactly induced} $\CG$-module. The definitions and basic properties of these induction functors can be found in \cite[Sections~2.4-2.5]{Bushnell2006a} or \cite[Chapitre~I,~Section~5.1]{Vigneras1996a}. Both $\Ind^{G}_{H}(\WC)$ and $\CInd^{G}_{H}(\WC)$ are smooth $\CG$-modules, and $\CInd^{G}_{H}(\WC)$ is in fact a $\CG$-submodule of $\Ind^{G}_{H}(\WC)$. In general, $\CInd^{G}_{H}(\WC)$ is a proper $\CG$-submodule of $\Ind^{G}_{H}(\WC)$, but equality holds whenever $\WC$ and, either $\CInd^{G}_{H}(\WC)$, or $\Ind^{G}_{H}(\WC)$, are \textit{admissible}; see \cite[Section~5.6,~pg.~42]{Vigneras1996a}. We recall that a smooth $\CG$-module $\VC$ is said to be \textit{admissible} if, for every compact open subgroup $K$ of $G$, the vector subspace $\VC^{K}$ consisting of all $K$-fixed vectors is finite-dimensional.

The main purpose of this paper is to prove the following result.

\begin{theorem} \label{main1}
Let $\k$ be a non-archimedean local field of characteristic different from $2$, and let $A$ be a finite-dimensional $\k$-algebra equipped with an involution $\sig$. Let $J$ be a $\sig$-invariant nilpotent subalgebra of $A$, let $G = 1+J$, and let $\VC$ be an irreducible smooth $\CGs$-module. Then, there exist a $\sig$-invariant algebra subgroup $H$ of $G$ and a smooth character $\map{\vartheta}{\Hs}{\cx}$ such that $\VC \cong \CInd^{\Gs}_{\Hs}(\C_{\vartheta})$. Furthermore, $\VC$ is admissible, and thus $\CInd^{\Gs}_{\Hs}(\C_{\vartheta}) = \Ind^{\Gs}_{\Hs}(\C_{\vartheta})$.
\end{theorem}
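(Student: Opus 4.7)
The plan is to mirror Boyarchenko's proof in the $\sig$-equivariant setting by induction on $\dim_{\k} J$. The base case $J = 0$ is trivial, so assume the conclusion for all $\sig$-invariant nilpotent $\k$-algebras of strictly smaller dimension and let $\VC$ be an irreducible smooth $\CGs$-module.

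I first locate a $\sig$-invariant central piece of $G$. The two-sided annihilator $I_{0} = \{x \in J : xJ = Jx = 0\}$ is nonzero by nilpotence of $J$ and is $\sig$-invariant since $\sig$ is an antiautomorphism, so it contains a minimal nonzero $\sig$-invariant $\ks$-subspace $I$; then $N = 1+I$ is $\sig$-invariant and central in $G$, and $\Ns$ is central in $\Gs$. Schur's lemma for irreducible smooth modules over $\ell$-groups then produces a smooth character $\chi \in \Ns^{\circ}$ through which $\Ns$ acts on $\VC$.

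The argument splits on whether $\chi$ extends to a smooth character of $\Gs$. If yes, choose such an extension $\tilde\chi$; the twist $\VC \ox \C_{\tilde\chi\inv}$ has trivial $\Ns$-action and so descends to an irreducible smooth module over $\Gs/\Ns$. One identifies $\Gs/\Ns$ with $(G/N)^{\bar\sig}$ via the vanishing of $H^{1}(\langle\sig\rangle, N)$, immediate from the averaging operator $\frac{1}{2}(\id+\sig)$ on $N$ under $\text{char}(\k) \neq 2$; the inductive hypothesis applied to the smaller algebra group $G/N = 1+(J/I)$ then yields the required data, which is pulled back through the quotient. If no extension exists, the self-duality of $\k$ (via $\tet$) identifies $\chi$ with a $\ks$-linear functional on $I$; combined with the commutator bracket on $J$ this yields a non-degenerate, $\sig$-compatible bilinear form $B$ on a suitable quotient of $J$. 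I then construct a proper $\sig$-invariant subalgebra $J' \subsetneq J$ with $I \subseteq J'$ that is maximal isotropic for $B$, to which $\chi$ extends as a smooth character $\tilde\chi$ of $(1+J')^{\sig}$; a Mackey (Stone--von Neumann) argument gives $\VC \cong \CInd^{\Gs}_{(1+J')^{\sig}}(\tilde\chi)$, and applying the inductive hypothesis to the smaller $\sig$-invariant algebra $J'$, together with transitivity of compact induction, completes the step.

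The main obstacle is the construction of the $\sig$-invariant maximal isotropic subalgebra $J'$ in the non-extending case. In Boyarchenko's original setting one may freely choose any Lagrangian; here it must be $\sig$-stable. The decomposition $J = J^{+} \oplus J^{-}$ into $\pm 1$-eigenspaces for the $\ks$-linear map $x \mapsto \sig(x)$ (available under $\text{char}(\k) \neq 2$), together with the $\sig$-compatibility of $B$, should permit a $\sig$-equivariant refinement of the Lagrangian construction by building $J'$ inductively within each eigenspace and averaging to resolve pairing-incompatibilities between $J^{+}$ and $J^{-}$. The same $\frac{1}{2}$-averaging is needed to upgrade $\chi$ to a genuine group homomorphism $\tilde\chi$ on $(1+J')^{\sig}$, eliminating a potential cocycle obstruction. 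Admissibility of $\VC$ and the equality $\CInd^{\Gs}_{\Hs}(\C_{\vartheta}) = \Ind^{\Gs}_{\Hs}(\C_{\vartheta})$ then follow $\sig$-equivariantly from the compactness analysis in Boyarchenko's proof.
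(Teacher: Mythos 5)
The proposal takes a genuinely different route from the paper, and it has a real gap at precisely the point you flag as ``the main obstacle.'' The paper does not use the two-sided annihilator, nor the dichotomy of whether the central character extends to $\Gs$, nor a Lagrangian/Stone--von Neumann argument. Instead, it descends along the powers $J \supseteq J^{2} \supseteq \cdots$ to find the minimal $m$ for which $C_{G_{m}}(\sig)$ acts by a $\Gs$-invariant character $\vsig$, then chooses an ideal $L$ with $J^{m} \subseteq L \subseteq J^{m-1}$, $\dim C_{L}(\sig)/C_{J^{m}}(\sig) = 1$ and $\vsig([\Gs,C_{1+L}(\sig)]) \neq 1$. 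The pairing $\varphi(g)(h)=\vsig([g,h])$ then gives a homomorphism $\map{\varphi}{\Gs}{C_{1+L}(\sig)^{\circ}}$, and the key point is that $S=\Psi^{-1}(\ker\varphi)$ is a codimension-one $\ks$-subspace of $\Js$ containing $C_{J^{2}}(\sig)$; the $\k$-span $\widehat{S}+J^{2}$ is then a codimension-one $\sig$-invariant subalgebra $J_{0}$ of $J$, which is $\sig$-stable \emph{by construction} rather than by any averaging. One step of Rodier's spectral decomposition plus induction on $\dim J$ finishes.

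The gap in your argument is the construction of a $\sig$-stable maximal isotropic subalgebra $J'$ in the non-extending case. That is not a minor refinement of Boyarchenko's argument: Boyarchenko himself does not build a full Lagrangian either, but rather proceeds one codimension at a time for exactly this kind of reason. Your sketch, ``building $J'$ inductively within each eigenspace and averaging to resolve pairing-incompatibilities,'' is not a proof and does not obviously converge. The decomposition $J=J^{+}\oplus J^{-}$ is available, but the symplectic form coming from $\chi\circ[\ ,\ ]$ need not restrict nicely to each eigenspace, there is no reason a $\sig$-stable Lagrangian subspace is also a \emph{subalgebra}, and a $\sig$-stable Lagrangian subspace may simply not exist in general depending on how $\sig$ interacts with the form. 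The same vagueness affects the claimed ``cocycle obstruction'' resolution for extending $\chi$ to a character of $(1+J')^{\sig}$: extending characters along $\Gs$-type subgroups is nontrivial because $\Gs$ is not an algebra group, and the paper handles it via the Cayley transform $\Psi$ identifying $\Gs$ with the $-1$-eigenspace $\Js$, which your proposal never uses. Finally, the admissibility at the end is asserted rather than established; the paper gets it from Rodier's Th\'eor\`eme 4 after showing that the relevant orbit $\tet^{\Gs}$ is closed (equal to the fibre over $\vsig$), which is substantive and does not follow ``$\sig$-equivariantly'' from Boyarchenko without argument. Your Case 1 (character extends) is plausible modulo the $H^{1}$-vanishing identification of $\Gs/\Ns$ with $(G/N)^{\sig}$, but the non-extending case is where the theorem lives, and there the proposal does not close.
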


The proof will be by induction on the dimension of $J$, the result being obvious in the case where $\VC$ is one-dimensional. Thus, in the following section, we will assume that $\dim \VC \geq 2$, and construct an adequate $\sig$-invariant subalgebra $J_{0}$ of $J$ with codimension $1$ (which depends on the choice of $\VC$). We note that every subalgebra $J_{0}$ of $J$ with codimension $1$ contains $J^{2}$, and hence is a two-sided ideal of $J$. Otherwise, $J = J_{0}+J^{2}$, and this implies that $J_{0} = J$; for a proof see, for example, \cite[Lemma~3.1]{Isaacs1995a}. Then, $G_{0} = 1+J_{0}$ is a normal closed subgroup of $G$, $C_{G_{0}}(\sig)$ is a normal closed subgroup of $\Gs$, and we may consider the restriction $\Res^{\Gs}_{C_{G_{0}}(\sig)}(\VC)$ of $\VC$ to $C_{G_{0}}(\sig)$.

\section{The inductive step} \label{step}

Throughout this section, we let the notation be as described in \reft{main1},  and assume that there exists an irreducible smooth $\CGs$-module $\VC$ with $\dim \VC \geq 2$. In particular, by Schur's lemma (see \cite[pg.~21]{Bushnell2006a}), the group $\Gs$ cannot be abelian. Therefore, the algebra group $G = 1+J$ is also not abelian, and thus $J^{2} \neq 0$.

One crucial tool to deal with groups of $\sig$-fixed elements is the \textit{Cayley transform} $\map{\Psi}{J}{G}$ defined by $\Psi(a)=(1-a)(1+a)\inv$ for all $a\in J$. We note that, $\Psi(a) = 1-2a+2a^2-2a^3+\cdots$ for all $a\in J$. Under our assumption that $\k$ has characteristic not equal to $2$, $\Psi$ is a bijection with inverse $\map{\Phi}{G}{J}$ given by
$\Phi(x)=(x-1)(x+1)\inv$ for all $x\in G$. As a first application, we obtain the following result.

\begin{lemma} \label{cayley}
Let $\Js = \{a\in J \colon \sig(a) = -a\}$. Then, the Cayley transform defines a bijection between $\Js$ and $\Gs$. Furthermore, $\Js$ is a Lie subalgebra of $J$ when $J$ is considered as a $\ks$-algebra. 
\end{lemma}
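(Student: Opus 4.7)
The plan is to prove both assertions by direct computation, using the definition of $\Psi$ and the fact that $\sig$ is an anti-automorphism fixing $1$.

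First I would show that $\Psi(\Js) \subseteq \Gs$. Given $a \in \Js$, so $\sig(a) = -a$, I would compute $\sig\bigl(\Psi(a)\inv\bigr)$, noting that $\Psi(a)\inv = (1+a)(1-a)\inv$ (this uses the fact that $1-a$ and $1+a$ commute, since $(1-a)(1+a) = 1 - a^2 = (1+a)(1-a)$, and so the inverses commute with both). Applying the anti-automorphism property gives
\[
\sig\bigl(\Psi(a)\inv\bigr) = \sig\bigl((1-a)\inv\bigr)\,\sig(1+a) = (1+a)\inv (1-a) = (1-a)(1+a)\inv = \Psi(a),
\]
so $\Psi(a) \in \Gs$. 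Next I would check the reverse inclusion $\Phi(\Gs) \subseteq \Js$. Given $x \in \Gs$, so $\sig(x) = x\inv$, a similar computation using the anti-automorphism property and the identities $(x\inv + 1)\inv = (x+1)\inv x$ and $x\inv - 1 = x\inv(1-x)$ yields
\[
\sig\bigl(\Phi(x)\bigr) = \sig(x+1)\inv\,\sig(x-1) = (x\inv+1)\inv(x\inv - 1) = (1+x)\inv(1-x) = -\Phi(x),
\]
so $\Phi(x) \in \Js$. Since $\Psi$ and $\Phi$ are mutually inverse bijections between $J$ and $G$, this proves the first assertion.

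For the second assertion, $\Js$ is evidently a $\ks$-subspace of $J$: it is closed under addition because $\sig$ is additive, and closed under multiplication by scalars in $\ks$ because $\sig$ acts trivially on $\ks$. To verify closure under the commutator bracket $[a,b] = ab - ba$, take $a, b \in \Js$ and compute
\[
\sig([a,b]) = \sig(ab) - \sig(ba) = \sig(b)\sig(a) - \sig(a)\sig(b) = (-b)(-a) - (-a)(-b) = ba - ab = -[a,b],
\]
so $[a,b] \in \Js$, as required.

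No step is a genuine obstacle here; the only point one must be careful about is consistently using that $\sig$ reverses products (it is an involution, not a homomorphism), and recognising that $1 \pm a$ commute, which allows $\Psi(a)\inv$ to be written as $(1+a)(1-a)\inv$ with the factors in either order. The computation of $\sig(\Phi(x))$ is the slightly more delicate of the two, because one has to clear $x\inv$ from the formula to simplify into the form $-\Phi(x)$, but this is routine once $\sig(x) = x\inv$ is substituted.
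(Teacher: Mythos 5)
Your proof is correct and takes essentially the same approach as the paper. The paper packages the first part via the intermediate identities $\sig(\Psi(a))=\Psi(\sig(a))$ and $\Psi(-a)=\Psi(a)^{-1}$ (resp.\ $\Phi(x^{-1})=-\Phi(x)$), whereas you compute $\sig(\Psi(a)^{-1})$ and $\sig(\Phi(x))$ directly, but both arguments rest on the same two observations—$\sig$ reverses products, and $1+a$ commutes with $1-a$—and the commutator verification is identical.
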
 

\begin{proof}
We note that $\Psi(-a)=\Psi(a)\inv$ for all $a \in J$, and that $\Phi(x\inv) = -\Phi(x)$ for all $x\in G$. Therefore, we deduce that $\sig(\Psi(a)) = \Psi(\sig(a)) = \Psi(-a) = \Psi(a)\inv$ for all $a\in \Js$. A similar calculation for $\Phi$ shows that $\Psi$ is indeed a bijection between $\Js$ and $\Gs$. It is also easy to check that $[a,b] \in \Js$ for all $a, b \in \Js$, and so $\Js$ is a Lie subalgebra of $J$ (when $J$ is considered as a $\ks$-algebra).
\end{proof}

The proof of \reft{main1} will proceed by induction on the dimension of $J$. To start with, we consider the descending sequence $J \supseteq J^{2} \supseteq J^{3} \supseteq \cdots$ of two-sided ideals of $J$. For every $n \in \N$, we set $G_{n} = 1+J^{n}$, so that we obtain a descending sequence $$G = G_{1} \supseteq G_{2} \supseteq G_{3}\supseteq\cdots$$ of normal subgroups of $G$. It is obvious that, for every $n \in \N$, the ideal $J^{n}$ is $\sig$-invariant, and hence the subgroup $G_{n}$ is also $\sig$-invariant. Therefore, we obtain a descending sequence $$\Gs = C_{G_{1}}(\sig) \supseteq C_{G_{2}}(\sig) \supseteq C_{G_{3}}(\sig) \supseteq\cdots$$ of normal subgroups of $\Gs$. We now prove the following auxiliary result.

\begin{lemma} \label{comm}
For every $n \in \N$, we have $[G,G_{n}] \cap \Gs = [\Gs,C_{G_{n}}(\sig)]$.
\end{lemma}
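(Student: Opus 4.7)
The inclusion $[\Gs, C_{G_n}(\sig)] \subseteq [G, G_n] \cap \Gs$ is immediate: a commutator $[g, h]$ with $g \in \Gs$ and $h \in C_{G_n}(\sig) \subseteq G_n$ lies in $[G, G_n]$ by construction, and in $\Gs$ because $\sig$ acts as a group automorphism, so $[g,h]^\sig = [g^\sig, h^\sig] = [g,h]$. The subgroup they generate is therefore contained in $[G,G_n]\cap\Gs$, and the real content of the lemma is the reverse inclusion.

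For that, the plan is a descending induction along the filtration $\{G_m = 1 + J^m\}_{m\geq1}$. Two facts make this finite and effective: $J$ is nilpotent so $G_m = 1$ for $m$ large, and the commutator expansion $[1+a, 1+b] = 1 + (ab - ba)(1+ba)\inv$ together with $ab - ba \in J^{k+n}$ for $a \in J^k, b \in J^n$ gives $[G, G_n] \subseteq G_{n+1}$. Hence any $z \in [G,G_n] \cap \Gs$ already lies in $C_{G_{n+1}}(\sig)$, and it suffices to show that, for each $m \geq n+1$, every $z \in [G,G_n] \cap \Gs \cap G_m$ can be written $z = c \cdot z'$ with $c \in [\Gs, C_{G_n}(\sig)]$ and $z' \in [G,G_n]\cap\Gs\cap G_{m+1}$; iterating down this finite chain finishes the proof.

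The step is carried out by passing to the graded piece $G_m/G_{m+1}$, which the assignment $1 + a \mapsto a$ identifies as an abelian group with $J^m/J^{m+1}$. This identification is $\sig$-equivariant once $\sig$ on the right is interpreted as $a \mapsto -\sig(a)$, as follows from $g^\sig = \sig(g\inv)$; because $\mathrm{char}\,\k \neq 2$ the graded piece splits into $\pm1$-eigenspaces, and the $\sig$-fixed part of $G_m/G_{m+1}$ is identified with $(J^m \cap \Js)/(J^{m+1} \cap \Js)$. Thus the image of $z$ in $J^m/J^{m+1}$ lies in this $-1$-eigenspace. At the leading graded level $n+1$, the expansion $[\Psi(x), \Psi(y)] \equiv 1 + 4[x,y] \pmod{G_{n+2}}$ for $x \in \Js$ and $y \in J^n\cap\Js$ (obtained from $\Psi(a) = 1 - 2a + O(a^2)$, compare \refl{cayley}) identifies the image of $[\Gs, C_{G_n}(\sig)]$ with the $\ks$-span of $[\Js, J^n \cap \Js]$ modulo $J^{n+2}$, while the analogous expansion for arbitrary commutators identifies the image of $[G, G_n]$ with $[J, J^n]$ modulo $J^{n+2}$; analogous leading-term identifications hold at higher graded levels.

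The main obstacle is the resulting Lie-algebraic matching. Writing $J^+ = \{a \in J \colon \sig(a) = a\}$, a parity count decomposes the $-1$-eigenspace of $[J, J^n]$ as $[\Js, J^n \cap \Js] + [J^+, J^n \cap J^+]$, and the second summand has to be absorbed into the first modulo $J^{n+2}$. Under the standing hypothesis of the section that $\Gs$ is not abelian, equivalently that $\Js$ is not an abelian Lie subalgebra of $J$, one has a rich supply of $\Js$-brackets to play with; concretely the plan is to rewrite each bracket $[a, b]$ with $a, b \in J^+$ as a bracket of $\Js$-elements modulo $J^{n+2}$, by polarization and by adjusting $a$ and $b$ along $\Js$-translates, absorbing the higher-order error into the outer descending induction. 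This step is where the assumption $\mathrm{char}\,\k \neq 2$ (through the bijection $\Psi$) and the non-abelianness of $\Js$ are both essential, and is the technical core of the argument.
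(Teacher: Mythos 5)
Your easy inclusion is correct, and the idea of descending through the filtration $G_m = 1+J^m$ is natural, but the step you flag as the technical core is not a detail to be filled in later --- it is false as a graded Lie-algebra statement. Concretely, for $J = \fru_4(\k)$ with the anti-transpose involution $\sig(a)_{ij}=a_{5-j,5-i}$, one has $\Js = \langle e_{12}-e_{34},\, e_{13}-e_{24}\rangle$, which is abelian, so $[\Js,\Js]=0$; yet $e_{12}+e_{34}$ and $e_{23}$ lie in $J^{+}$ and $[e_{12}+e_{34},\,e_{23}] = e_{13}-e_{24}$ is a nonzero element of $\Js\cap J^2$ not contained in $J^3$. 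Thus at $n=1$, $m=2$ the class of $[J^{+}, J\cap J^{+}]$ in $J^{2}/J^{3}$ is strictly larger than that of $[\Js, J\cap\Js]$, and no polarisation or $\Js$-translation can close the gap when $[\Js,\Js]=0$. Appealing to the section's standing hypothesis that $\Gs$ is non-abelian does not rescue the argument: the Lemma is stated and used as a hypothesis-free statement, and even when $\Gs$ is non-abelian you supply no actual mechanism by which $\Js$-brackets would absorb $[J^{+},J^n\cap J^{+}]$.

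The paper's proof has a completely different shape and never descends to the graded Lie algebra. It introduces the subgroup $[G,\sig]$ generated by $\{g\sig(g) : g\in G\}$, asserts a decomposition $G = \Gs\cdot[G,\sig]$ with $\Gs\cap[G,\sig]=1$ and $[G,\sig]$ normal, and then uses the resulting projection $G\to\Gs$ (a group homomorphism restricting to a projection $G_n\to C_{G_n}(\sig)$) to rewrite any product of commutators lying in $[G,G_n]\cap\Gs$ as the corresponding product of commutators of $\Gs$-components, which lands in $[\Gs,C_{G_n}(\sig)]$. That is a global semidirect-product argument, with no filtration and no Lie-algebra matching. Be aware, though, that the assertion $\Gs\cap[G,\sig]=1$ is clear for the generating set $\{g\sig(g)\}$ (whose Cayley preimage is $J^{+}$) but considerably less clear for the subgroup it generates, and the $\fru_4(\k)$ example above is a good test case for that claim --- and indeed for the Lemma itself --- before you rely on either route.
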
 

\begin{proof} 
Let $[G,\sig]$ denote the subgroup of $G$ generated by all the elements of the form $g\inv \gs$ for $g\in G$. Since $\gs = \sig(g\inv)$, we see that $[G,\sig]$ is also generated by the set $\{g\,\sig(g) \colon g \in G\}$. Now, $G$ decomposes as the product $G = \Gs\,[G,\sig]$, and we clearly have $\Gs \cap [G,\sig] = 1$. Moreover, for every $g \in G$ and every $h \in \Gs$, we have $$h(g\,\sig(g))h\inv = h(g\,\sig(g))\,\sig(h) = (hg)\,\sig(hg),$$ which implies that $[G,\sig]$ is a normal subgroup of $G$. Since this argument does not depend on $G$, we obtain a similar decomposition $G_{n} = C_{G_{n}}(\sig)\,[G_n,\sig]$ for all $n\in \N$. 

Let $n \in \N$ be arbitrary, let $g\in G$, and let $h\in G_n$. Then, $g = g_1g_2$ for uniquely determined elements $g_1 \in \Gs$ and $g_2\in [G,\sig]$. Similarly, $h = h_{1}h_{2}$ for uniquely determined elements $h_1 \in C_{G_{n}}(\sig)$ and $h_2\in [G_n,\sig]$. Therefore, we get $$ghg\inv h\inv = (g_1h_1g_1\inv h_1 \inv) (h_1(g_1((h_1\inv g_2 h_1)h_2 g_2)g_1 \inv )h_1\inv ).$$ Since $$h_1(g_1((h_1\inv g_2 h_1)h_2 g_2)g_1 \inv )h_1\inv\in [G,\sig],$$ we conclude that $ghg\inv h\inv\in \Gs$ if and only if $ghg\inv h\inv = g_1h_1g_1\inv h_1\inv$. Using a recursive argument, we see that the analogous conclusion holds for an arbitrary product of commutators, and this completes the proof.
\end{proof}

Since $J$ is nilpotent, there is $n \in \N$ be such that $J^{n} \neq 0$ and $J^{n+1} = 0$. As justified above, we must have $n \geq 2$. Since $C_{G_{n}}(\sig)$ lies in the centre of $\Gs$, Schur's lemma implies that $C_{G_{n}}(\sig)$ acts on $\VC$ by scalar multiplications. Thus, we may choose the smallest positive integer $m$ for which there exists a smooth character $\map{\vsig}{C_{G_{m}}(\sig)}{\cx}$ such that
\begin{equation} \label{eq1}
gv=\vsig(g)v \tag{$\dag$}
\end{equation}
for all $g\in C_{G_{m}}(\sig)$ and all $v\in \VC$. We note that, since $\VC$ is an irreducible smooth $\CGs$-module with $\dim \VC \geq 2$, we must have $m \geq 2$. Furthermore, since\linebreak $[\Gs, C_{G_{m-1}}(\sig)] \subseteq C_{G_{m}}(\sig)$, the minimal choice of $m$ implies that $\vsig$ is not identically equal to $1$. Otherwise, Schur's lemma would imply that $C_{G_{m}}(\sig)$ acts on $\VC$ by scalar multiplications.

Our next goal is to prove that there exists a $\sig$-invariant ideal $L$ of $J$ satisfying $$J^{m} \subseteq L \subseteq J^{m-1},\quad \dim (L \slash J^{m})=1,\quad \text{and}\quad \vsig([\Gs, C_{1+L}(\sig)]) \neq 1.$$ We note that, in particular, $C_{G_{m}}(\sig) \subseteq  C_{1+L}(\sig) \subseteq C_{G_{m-1}}(\sig)$. To achieve this, we first prove the following result.

\begin{lemma} \label{involutionisom}
For every $m \in \N$ with $m \geq 2$, there is an isomorphism of abelian groups $C_{G_{m-1}}(\sig) \slash C_{G_{m}}(\sig) \cong C_{J^{m-1}}(\sig) \slash C_{J^{m}}(\sig)$.
\end{lemma}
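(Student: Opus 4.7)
The plan is to realize this isomorphism directly through the Cayley transform of Lemma \ref{cayley}. As a preliminary observation, I would upgrade that lemma to show that, for every $k \in \N$, $\Psi$ restricts to a bijection $C_{J^{k}}(\sig) \to C_{G_{k}}(\sig)$. The polynomial identity $\Psi(a) = 1-2a+2a^{2}-2a^{3}+\cdots$ gives $\Psi(J^{k}) \subseteq 1+J^{k} = G_{k}$, while for $x = 1+b \in G_{k}$ the formula $\Phi(x) = b(2+b)\inv$ places $\Phi(x)$ in $J^{k}$ because $(2+b)\inv = \tfrac{1}{2} - \tfrac{b}{4} + \tfrac{b^{2}}{8} - \cdots$ is a well-defined element of $\k\cdot 1 + J$ (here we use $\operatorname{char}(\k) \neq 2$) and $J^{k}$ is an ideal of $\k\cdot 1 + J$; combined with Lemma \ref{cayley}, these containments yield the claimed restricted bijection.

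The key observation is that, when $m \geq 2$ and $a \in J^{m-1}$, every term $2a^{j}$ with $j \geq 2$ lies in $J^{2(m-1)} \subseteq J^{m}$, so $\Psi(a) \equiv 1 - 2a \pmod{J^{m}}$. I would then define
$$\alpha \colon C_{J^{m-1}}(\sig) \longrightarrow C_{G_{m-1}}(\sig) \slash C_{G_{m}}(\sig), \qquad a \longmapsto \Psi(a)\,C_{G_{m}}(\sig),$$
and check that it is a group homomorphism out of the additive group on the left: for $a, b \in C_{J^{m-1}}(\sig)$,
$$\Psi(a)\Psi(b) \equiv (1-2a)(1-2b) = 1 - 2(a+b) + 4ab \equiv 1 - 2(a+b) \equiv \Psi(a+b) \pmod{J^{m}},$$
since $4ab \in J^{2(m-1)} \subseteq J^{m}$.

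To finish, I would identify the kernel of $\alpha$ and verify surjectivity. If $a \in C_{J^{m-1}}(\sig)$ satisfies $\Psi(a) \in C_{G_{m}}(\sig) \subseteq 1+J^{m}$, then $\Psi(a)-1 = -2a + c$ with $c \in J^{2(m-1)} \subseteq J^{m}$, forcing $-2a \in J^{m}$ and therefore $a \in J^{m} \cap \Js = C_{J^{m}}(\sig)$, since $\operatorname{char}(\k) \neq 2$; the reverse inclusion is the preliminary observation applied in degree $m$. For surjectivity, any $x \in C_{G_{m-1}}(\sig)$ equals $\Psi(\Phi(x))$, and the preliminary observation shows $\Phi(x) \in C_{J^{m-1}}(\sig)$. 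Passing to the quotient, $\alpha$ then descends to the required isomorphism of abelian groups.

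I expect no serious obstacle here: the argument rests entirely on the two numerical coincidences that $2$ is invertible in $\k$ and that $m \geq 2$ forces $2(m-1) \geq m$. The latter is precisely what makes $\Psi$ approximately additive modulo $J^{m}$, and the former is what allows one both to invert $2$ inside $\Phi$ and to deduce $-2a \in J^{m} \Rightarrow a \in J^{m}$ in the kernel computation.
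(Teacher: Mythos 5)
Your argument is correct, and it takes a genuinely different route from the paper's. The paper induces the isomorphism from the map $u \mapsto 1+u$: it first shows that $u + J^{m} \mapsto (1+u)G_{m}$ gives an isomorphism $J^{m-1}/J^{m} \cong G_{m-1}/G_{m}$, then observes this isomorphism is $\sig$-equivariant, and finally claims it restricts to an isomorphism $(C_{J^{m-1}}(\sig)+J^{m})/J^{m} \cong (C_{G_{m-1}}(\sig)G_{m})/G_{m}$. That last step is where the work hides: one must verify that $\sig$-fixed points commute with passing to the quotient (e.g.\ for $x\in G_{m-1}$ with $x^{\sig}x^{-1}\in G_{m}$, one needs $x\in C_{G_{m-1}}(\sig)G_{m}$), which ultimately requires $\operatorname{char}\k\neq2$ just as your argument does. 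Your approach routes everything through the Cayley transform $\Psi$ instead: since Lemma \ref{cayley} already gives the exact bijection $\Js \to \Gs$, and your preliminary observation refines it to $C_{J^{k}}(\sig)\to C_{G_{k}}(\sig)$ for each $k$, the fixed-point bookkeeping is automatic, and the only remaining issue is that $\Psi$ is merely approximately additive (mod $J^{m}$, via $2(m-1)\geq m$). Both proofs rest on the same two facts ($2$ invertible, $2(m-1)\geq m$), but your choice of map trades a slightly more complicated power-series expansion for a cleaner treatment of the $\sig$-fixed subgroups; it also fits more naturally with the Cayley-transform machinery the paper uses everywhere else. One small point worth making explicit: in the computation $\Psi(a)\Psi(b)\equiv\Psi(a+b)\pmod{J^{m}}$, the element $\Psi(a)\Psi(b)\Psi(a+b)^{-1}$ lies in $\Gs$ because $\Gs$ is a group and $a,b,a+b\in\Js$, so being $\equiv 1\pmod{J^{m}}$ indeed puts it in $C_{G_{m}}(\sig)$ and not just in $G_{m}$; your write-up elides this, but the reader can supply it.
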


\begin{proof}
Firstly, we observe that the mapping $u \mapsto 1 + u$ defines an isomorphism of abelian groups $J^{m-1}\slash J^{m} \cong 1+(J^{m-1}\slash J^{m})$, and thus we naturally obtain an isomorphism of abelian groups $J^{m-1}\slash J^{m} \cong G_{m-1} \slash G_m$. It is straightforward to check that this isomorphism is $\sig$-invariant, and that it restricts to an isomorphism $$(C_{J^{m-1}}(\sig)+J^{m})\slash J^{m} \cong (C_{G_{m-1}}(\sig) G_m)\slash G_m.$$ The result follows.
\end{proof}

By the minimal choice of $m$, we know that $C_{G_{m-1}}(\sig)\slash C_{G_{m}}(\sig) \neq 1$, and thus $C_{J^{m-1}}(\sig) \slash C_{J^{m}}(\sig) \neq 0$. Since both $C_{J^{m-1}}(\sig)$ and $C_{J^{m}}(\sig)$ are $\ks$-vector subspaces of $\Js$ (by \refl{cayley}), we have $$C_{J^{m-1}}(\sig) = L_1 \oplus \cdots \oplus L_t$$ where $L_1, \ldots, L_t$ are $\ks$-vector subspaces of $\Js$ satisfying $$C_{J^{m}}(\sig) \subseteq L_{i} \subseteq C_{J^{m-1}}(\sig) \quad \text{and} \quad \dim L_i = \dim C_{J^{m}}(\sig) + 1$$ for all $1 \leq i \leq t$. By the isomorphism above, we conclude that $$C_{G_{m-1}}(\sig) = (1+L_1) \cdots (1+L_t),$$ and thus there exists $1 \leq s \leq t$ such that $\vsig([\Gs,1+L_{s}]) \neq 1$. Otherwise, we would have $\vsig([C_{G_{m-1}}(\sig),C_{G_{m-1}}(\sig)]) = 1$, and hence $C_{G_{m-1}}(\sig)$ would act on $\VC$ by scalar multiplications.

Let $u \in L_{s}$ be such that $L_{s} = \ks u +C_{J^{m}}(\sig)$, and define $L = \k u + J^{m}$. We note that $C_{L}(\sig) = L_{s}$. We set $N = 1+L$, and observe that $N$ is a normal subgroup of $G$, and thus $\Ns$ is a normal subgroup of $\Gs$. It is clear that $$N = (1+\k u) M \quad \text{and} \quad \Ns = (1+\ks u)\Ms$$ where we set $M = G_{m}$. We note that the smooth character $\map{\vsig}{\Ms}{\cx}$ is $\Gs$-invariant. Indeed, \refl{comm} asserts that $$[\Gs,\Ms] = [G,M]\cap \Gs \subseteq C_{G_{m+1}}(\sig),$$ and so $\vsig([\Gs,\Ms])=1$.

\begin{proposition}\label{invphi}
Let $\map{\vsig}{\Ms}{\cx}$ be a $\Gs$-invariant smooth character of $\Ms$, and define $$S = \{a\in \Js \colon \vsig([\Psi(a),\Psi(b)]) = 1 \text{ for all } b\in C_{L}(\sig)\}$$ where $\map{\Psi}{J}{G}$ is the Cayley transform. Then, $S$ is a $\ks$-vector subspace of $\Js$ satisfying $C_{J^{2}}(\sig) \subseteq S$ and $\dim S \geq \dim \Js-1$. Furthermore, if we define the map $\map{\varphi}{\Gs}{\Ns^{\circ}}$ by $$\varphi(g)(h) = \vsig([g,h])$$ for all $g\in \Gs$ and all $h\in \Ns$, then $\varphi$ is a group homomorphism satisfying $$\ker(\varphi) = \Psi(S)\quad \text{and}\quad \varphi(\Gs)\subseteq \Ms^{\perp}$$ where $\Ms^{\perp}$ is the orthogonal subgroup of $\Ms$ in $\Ns^{\circ}$. In particular, $\varphi$ induces naturally a group homomorphism $\map{\widehat{\varphi}}{\Gs}{(\Ns\slash \Ms)^{\circ}}$.
\end{proposition}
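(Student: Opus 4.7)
The plan is to begin with a key commutator identity that anchors the whole proof: for $a \in \Js$ and $b \in C_{L}(\sig) \subseteq J^{m-1}$, I claim that
$$[\Psi(a), \Psi(b)] = 1 + 4[a, b] \in \Ms.$$
This is the main technical point. To establish it, write $x = \Psi(a) - 1 \in J$ and $y = \Psi(b) - 1 \in J^{m-1}$, and note from the series $\Psi(a) = 1 - 2a + 2a^{2} - \cdots$ that $x + 2a \in J^{2}$ and $y + 2b \in J^{2(m-1)} \subseteq J^{m}$ (using $m \geq 2$). In the algebra $A$ we have $\Psi(a)\Psi(b) - \Psi(b)\Psi(a) = [x,y]$; expanding and discarding all terms in $J^{m+1} = 0$ yields $[x, y] = 4[a, b] \in J^{m} \cap \Js = C_{J^{m}}(\sig)$. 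Right-multiplying $\Psi(a)\Psi(b)\Psi(a)\inv\Psi(b)\inv$ by $\Psi(a)\inv\Psi(b)\inv \in 1 + J$ leaves the $J^{m}$-part unchanged because $J^{m} \cdot J = 0$, which gives the displayed identity.

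With that identity in hand, decompose $b = \lambda u + w$ with $\lambda \in \ks$ and $w \in C_{J^{m}}(\sig)$; since $[a, w] \in J^{m+1} = 0$, the condition defining $S$ becomes: $a \in S$ if and only if $\vsig(1 + 4\lambda [a, u]) = 1$ for every $\lambda \in \ks$. Because $J^{2m} = 0$, the rule $\chi(v) = \vsig(1+v)$ defines a smooth character of the additive group $C_{J^{m}}(\sig)$, and I introduce $\Lambda \colon \Js \to (\ks)^{\circ}$ by $\Lambda(a)(\lambda) = \chi(4\lambda [a, u])$, where $(\ks)^{\circ}$ carries the $\ks$-module structure $(\mu \cdot \eta)(\lambda) = \eta(\mu\lambda)$. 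Bilinearity of $[\cdot, \cdot]$ and additivity of $\chi$ make $\Lambda$ $\ks$-linear, so $S = \ker(\Lambda)$ is a $\ks$-subspace of $\Js$. The inclusion $C_{J^{2}}(\sig) \subseteq S$ follows from $[J^{2}, J^{m-1}] \subseteq J^{m+1} = 0$. Self-duality of the local field $\ks$ furnishes a $\ks$-linear isomorphism $\ks \cong (\ks)^{\circ}$, so $(\ks)^{\circ}$ has $\ks$-dimension one, whence $\dim \Js - \dim S = \dim \Lambda(\Js) \leq 1$.

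For $\varphi$, first observe that $[G, N] \subseteq G_{m} = M$ (because $L \subseteq J^{m-1}$ and $J \cdot J^{m-1} \subseteq J^{m}$), so \refl{comm} gives $[g, h] \in \Ms$ for all $g \in \Gs$ and $h \in \Ns$, making $\varphi(g)(h) = \vsig([g, h])$ well-defined. The standard commutator identities $[g, h_{1}h_{2}] = [g, h_{1}] \cdot h_{1}[g, h_{2}]h_{1}\inv$ and $[g_{1}g_{2}, h] = g_{1}[g_{2}, h]g_{1}\inv \cdot [g_{1}, h]$, combined with the $\Gs$-invariance of $\vsig$ on $\Ms$, show that each $\varphi(g)$ is a smooth character of $\Ns$ and that $\varphi$ itself is a group homomorphism. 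Since $\Gs = \Psi(\Js)$ and $\Ns = \Psi(C_{L}(\sig))$ (the latter by the same argument as in \refl{cayley}, restricted to the $\sig$-invariant ideal $L$), the identity from the first paragraph directly identifies $\ker(\varphi)$ with $\Psi(S)$. Finally, for $m' \in \Ms$ and $g \in \Gs$, $\Gs$-invariance yields $\varphi(g)(m') = \vsig(gm'g\inv)\vsig(m')\inv = 1$, so $\varphi(\Gs) \subseteq \Ms^{\perp}$, and the factorisation $\widehat{\varphi} \colon \Gs \to (\Ns/\Ms)^{\circ}$ is then automatic. The chief obstacle is the commutator identity of the first paragraph; once it is established, everything else is either $\ks$-bilinearity or standard commutator manipulation.
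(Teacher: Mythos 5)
Your proof is correct, and it takes a genuinely more elementary route than the paper's. The paper establishes that $S$ is a $\ks$-subspace indirectly: it invokes \cite[Proposition~3.1]{Boyarchenko2011a} to get $[\Psi(\alpha a),\Psi(b)]\,[\Psi(a),\Psi(\alpha b)]\inv\in [G,M]$, appeals to \cite[Theorem~1.4]{Halasi2004a} for the inclusion $C_{J^{2}}(\sig) \subseteq S$, uses \refl{comm} to transfer these inclusions to the $\sig$-fixed setting, and proves additive closure separately via $\Psi(a+b)\inv\Psi(a)\Psi(b) \in C_{G_{2}}(\sig)$. You instead short-circuit all of this with the explicit identity $[\Psi(a),\Psi(b)] = 1 + 4[a,b]$, which holds here precisely because $b$ ranges over $C_{L}(\sig) \subseteq J^{m-1}$ and $J^{m+1}=0$, so the Campbell--Hausdorff-type corrections die. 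That identity (together with the observation that $(1+v)(1+v') = 1+v+v'$ on $J^{m}$, making $\chi(v)=\vsig(1+v)$ a genuine additive character) lets you realise $S$ directly as the kernel of a $\ks$-linear map $\Lambda\colon\Js\to(\ks)^{\circ}$, from which the subspace property, the inclusion $C_{J^{2}}(\sig)\subseteq S$ (via $[J^{2},J^{m-1}]\subseteq J^{m+1}=0$), and the codimension bound all drop out at once. This removes the dependence on the two external citations and replaces a multi-step commutator-subgroup argument with a single bilinear-form computation; what one gives up is that the argument is tightly tied to the fact that $N\subseteq G_{m-1}$ (i.e.\ that everything lands in the top nonzero layer), whereas the paper's route via \refl{comm} is phrased so as to reuse machinery that applies at every level of the filtration. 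One small inessential wrinkle: in establishing that $\varphi(g)(h)=\vsig([g,h])$ is well-defined, you do not actually need \refl{comm} --- the direct observation that $[g,h]\in [G,N]\cap\Gs\subseteq G_{m}\cap\Gs=\Ms$ already suffices, which is in fact what your sentence amounts to.
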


\begin{proof}
We first observe that the map $\varphi$ is a well-defined group homomorphism. On the one hand, we have $$[\Gs,\Ns] \subseteq [\Gs,C_{G_{m-1}}(\sig)] \subseteq \Ms.$$ On the other hand, since $[g,hk]=[g,k][g,h]^k$ and since $\vsig$ is $\Gs$-invariant, we deduce that $$\varphi(g)(hk) = \vsig([g,k])\vsig([g,h]) = \varphi(g)(h)\varphi(g)(k)$$ for all $g\in \Gs$ and all $h,k \in \Ns$. It follows that, for every $g\in \Gs$, the map $\map{\varphi(g)}{\Ns}{\cx}$ is a smooth character of $\Ns$. Similarly, since $[gh,k]=[g,k]^h[h,k]$, we have $$\varphi(gh)(k) = \vsig([g,k])\vsig([h,k]) = \varphi(g)(k)\varphi(h)(k)$$ for all $g\in \Gs$ and all $h,k \in \Ns$, and thus $\map{\varphi}{\Gs}{\Ns^{\circ}}$ is a group homomorphism.

Now, since $[\Gs,\Ms]\subseteq \ker(\vsig)$ (because $\vsig$ is $\Gs$-invariant), the image $\varphi(\Gs)$ lies in $\Ms^{\perp}$. Moreover, it is clear by the definition that $\ker(\varphi)=\Psi(S)$. Let $a\in \Js$, and let $\alpha \in \ks$ be arbitrary. We claim that $$[\Psi(\alpha a),\Psi(b)]\,[\Psi(a),\Psi(\alpha b)]\inv \in C_{G_{m-1}}(\sig)$$ for all $b\in C_{J^{m-1}}(\sig)$. Indeed, let $b\in C_{J^{m-1}}(\sig)$ be arbitrary. Then, \cite[Proposition 3.1]{Boyarchenko2011a} implies that $$[\Psi(\alpha a),\Psi(b)]\,[\Psi(a),\Psi(\alpha b)]\inv\in [G,M],$$ and thus it follows from \refl{comm} that $$[\Psi(\alpha a),\Psi(b)]\,[\Psi(a),\Psi(\alpha b)]\inv \in [\Gs,\Ms].$$ Since $\vsig$ is $\Gs$-invariant, we conclude that
\begin{equation}\label{eq2}
\vsig([\Psi(\alpha a),\Psi(b)]) = \vsig([\Psi(a),\Psi(\alpha b)])\tag{$\ddagger$}
\end{equation}
for all $b\in C_{J^{m-1}}(\sig)$, and this implies that $\alpha a\in S$ for all $\alpha\in \ks$ and all $a \in S$.

On the other hand, \cite[Theorem 1.4]{ Halasi2004a} asserts that $[G_{2},N]\subseteq [G_{2},G_{m-1}]\subseteq [G,M]$, and thus $$[C_{G_{2}}(\sig),\Ns]\subseteq \Gs \cap [G,M] \subseteq [\Gs,\Ms].$$ Since $\Ns = \Psi(C_{L}(\sig))$, we conclude that $C_{J^{2}}(\sig) \subseteq S$. Since $\ker(\varphi) = \Psi(S)$ and since $$\Psi(a+b)\inv \Psi(a)\Psi(b)\in C_{G_{2}}(\sig) \subseteq \Psi(S),$$ we see that $\Psi(a+b) \in \ker(\varphi)$ for all $a,b \in S$. It follows that $S$ is a $\ks$-vector subspace of $\Js$ satisfying $C_{J^{2}}(\sig) \subseteq S$.

To conclude the proof, we observe that there are canonical isomorphisms of groups $$\Ms^{\perp} \cong (\Ns\slash \Ms)^{\circ} \quad \text{and} \quad \Ns\slash \Ms \cong \Psi(L \slash J^{m}) \cong \ks.$$ Since $\ks$ is a self-dual field and since $$\Gs\slash \ker(\varphi) \cong \varphi(\Gs) \subseteq \Ms^{\perp},$$ it follows that $\dim \Js - \dim S \leq 1$, as required.
\end{proof}

Next, we prove the following crucial result.

\begin{proposition} \label{invextension}
Let $\map{\vsig}{\Ms}{\cx}$ be a $\Gs$-invariant smooth character of $\Ms$, and define $S \subseteq \Js$ as in \refp{invphi}. Then, $[\Ns,\Ns] \subseteq \ker(\vsig)$, and there exists $\tet \in \Ns^{\circ}$ such that $\tet\vert_{\Ms} = \vsig$. Moreover, the following properties hold.
\begin{enumerate}
\item If $\tet' \in \Ns^{\circ}$ is such that $\tet'\vert_{\Ms} = \vsig$, then $C_{\Gs}(\tet') = \Psi(S)$.
\item If $C_{\Gs}(\tet) \neq \Gs$ and if $\tet' \in \Ns^{\circ}$ is such that $\tet'\vert_{\Ms} = \vsig$, then there exists $g \in \Gs$ such that $\tet' = \tet^{g}$.
\end{enumerate}
\end{proposition}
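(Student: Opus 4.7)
The plan is to dispatch the four assertions in order, using the groundwork in \refp{invphi}.

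\emph{Showing $[\Ns,\Ns]\subseteq\ker(\vsig)$.} Every element of $\Ns=\Psi(C_L(\sig))$ can be written as $\Psi(\alpha u)z$ with $\alpha\in\ks$ and $z\in\Ms$: a direct Cayley-transform computation gives $\Psi(\alpha u+c)\Psi(\alpha u)\inv\in M\cap\Gs=\Ms$ for every $c\in C_{J^m}(\sig)$. Hence $\Ns$ is generated by $\Psi(\ks u)\cup\Ms$, and using the identities $[xy,z]=[x,z]^y[y,z]$ and $[x,yz]=[x,z][x,y]^z$, every element of $[\Ns,\Ns]$ is a product of conjugates of basic commutators of generators; the $\Gs$-invariance of $\vsig$ absorbs all conjugation factors. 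The three types of basic commutators each lie in $\ker(\vsig)$: $[\Ms,\Ms]$ because $\vsig$ is a character; $[\Psi(\alpha u),\Psi(\beta u)]=1$ because both elements lie in the commutative subring generated by $u$; and $[\Psi(\alpha u),z]\in[\Gs,\Ms]\subseteq\ker(\vsig)$ (since $\Psi(\alpha u)\in\Gs$ and $\vsig$ is $\Gs$-invariant).

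\emph{Existence of $\tet$ and assertion (i).} Since $[\Ns,\Ns]\subseteq\ker(\vsig)$, the character $\vsig$ factors through the subgroup $\Ms/[\Ns,\Ns]$ of the abelian group $\Ns/[\Ns,\Ns]$; injectivity of the divisible group $\cx$ furnishes an extension to a character $\tet$ of $\Ns$. Openness of $\Ms$ in $\Ns$ (which follows from openness of $G_m$ in $G$), together with the smoothness of $\vsig$, makes $\ker(\tet)$ open, so $\tet$ is smooth. For any extension $\tet'$, $g\in C_{\Gs}(\tet')$ is equivalent to triviality of the conjugation action of $g$ on $\tet'$, which, since $[\Gs,\Ns]\subseteq\Ms$, reduces to $\vsig([g,n])=\varphi(g)(n)=1$ for all $n\in\Ns$. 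Hence $C_{\Gs}(\tet')=\ker(\varphi)=\Psi(S)$, independently of the chosen extension.

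\emph{Assertion (ii).} The extensions of $\vsig$ to $\Ns$ form a torsor under $(\Ns/\Ms)^{\circ}\cong\Ms^{\perp}$, and a short commutator computation using the identity $[n\inv,g\inv]=[n,g]^{n\inv g\inv}$ and the $\Gs$-invariance of $\vsig$ shows $(g\cdot\tet)/\tet=\varphi(g)\inv$ as characters of $\Ns$; hence the $\Gs$-orbit of $\tet$ is the coset $\tet\cdot\varphi(\Gs)$. It therefore suffices to show $\varphi(\Gs)=\Ms^{\perp}$ when $C_{\Gs}(\tet)\neq\Gs$, i.e.\ when $\varphi$ is nontrivial. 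Pick $a\in\Js\setminus S$ and set $\omega_a:=\varphi(\Psi(a))$, a nontrivial smooth character of $\Ns/\Ms\cong\ks$ (with the isomorphism $\beta\mapsto\Psi(\beta u)\Ms$). The relation \refeq{eq2} from \refp{invphi} translates into $\omega_{\alpha a}(\beta)=\omega_a(\alpha\beta)$ for all $\alpha,\beta\in\ks$, so the map $\alpha\mapsto\omega_{\alpha a}$ is precisely the self-duality isomorphism $\ks\to\ks^{\circ}$ built from the nontrivial character $\omega_a$. Consequently $\varphi(\Gs)\supseteq\{\omega_{\alpha a}:\alpha\in\ks\}=(\Ns/\Ms)^{\circ}\cong\Ms^{\perp}$, completing the proof. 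The main subtlety will be precisely this last identification via self-duality; everything else reduces to bookkeeping with commutator identities.
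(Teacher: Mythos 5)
Your arguments for $[\Ns,\Ns]\subseteq\ker(\vsig)$ and for parts (i) and (ii) are correct and essentially agree with the paper's (the sign flip $(g\cdot\tet)/\tet=\varphi(g)\inv$ versus $\varphi(g)$ is harmless since $\varphi(\Gs)$ is a subgroup). The gap is in the existence of a \emph{smooth} extension $\tet$. Divisibility of $\cx$ does furnish an abstract group-theoretic extension of $\vsig$ from $\Ms$ to $\Ns$, but such an extension need not be continuous, and your argument that it is relies on the claim that $\Ms$ is open in $\Ns$ (``which follows from openness of $G_m$ in $G$''). That premise is false: $G_m=1+J^m$ is a closed but \emph{not} open subgroup of $G$, because a proper $\k$-subspace of a finite-dimensional $\k$-vector space over a non-archimedean local field is never open; and likewise $\Ms=\Psi(C_{J^m}(\sig))$ has codimension one in $\Ns=\Psi(C_L(\sig))$ as a $\ks$-``manifold'' and so is not open. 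Consequently $\ker(\vsig)$, while open in $\Ms$, is not open in $\Ns$, and the openness of $\ker(\tet)$ does not follow.

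To repair this you need a genuinely topological extension argument. Two standard routes: (a) note that $\Ms$ and $\Ns$ are $\ell_c$-groups, so $\Ms^{\circ}=\widehat{\Ms}$ and $\Ns^{\circ}=\widehat{\Ns}$; pass to the locally compact abelian group $\Ns/\overline{[\Ns,\Ns]}$, where $\vsig$ descends to a unitary character of the image of $\Ms$, and invoke Pontryagin duality (surjectivity of restriction to closed subgroups of LCA groups). (b) The paper's route is representation-theoretic: take an irreducible subquotient $\WC$ of the \emph{smooth} $\CNs$-module $\Ind^{\Ns}_{\Ms}(\C_\vsig)$. Since $\Ms$ is normal in $\Ns$ and $\vsig$ is $\Ns$-invariant, $\Ms$ acts on the whole induced module, hence on $\WC$, by $\vsig$; as $[\Ns,\Ns]\subseteq\Ms\cap\ker(\vsig)$, Schur's lemma forces $\dim\WC=1$, and $\WC$ automatically affords a \emph{smooth} character $\tet$ of $\Ns$ with $\tet\vert_{\Ms}=\vsig$. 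Either fix makes the rest of your proof go through.
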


\begin{proof}
By construction, we have $C_{L}(\sig)=\ks u \oplus C_{J^{m}}(\sig)$ for some $u\in C_{L}(\sig)$, and hence $$\Ns=(1+\ks u)\Ms = \Psi(\ks u) \Ms.$$ Since $[\Psi(\alpha u),\Psi(\beta u)]=1$, it is obvious that $\vsig([\Psi(\alpha u),\Psi(\beta u)])=1$ for all $\alpha,\beta\in \ks$. Since $\vsig$ is $\Gs$-invariant, it follows that $\vsig([\Ns,\Ns]) = 1$. Let $\mathbb{C}_\vsig$ denote the canonical one-dimensional $\CMs$-module associated with $\vsig$, and let $\WC$ be an irreducible subquotient of the smooth $\CNs$-module $\Ind^{\Ns}_{\Ms}(\C_\vsig)$. Since $\Ms$ is a normal subgroup of $\Ns$ and since $\vsig$ is $\Ns$-invariant, we have $x\phi = \vsig(x)\phi$ for all $x\in \Ms$ and all $\phi \in \Ind^{\Ns}_{\Ms}(\C_\vsig)$. Therefore, we must have $xw = \vsig(x)w$ for all $x\in \Ms$ and all $w\in \WC$. Since $[\Ns,\Ns]\subseteq \ker(\vsig)$, Schur's lemma implies that $\dim \WC = 1$, and thus $\WC$ affords a smooth character $\map{\tet}{\Ns}{\cx}$ which clearly satisfies $\tet\vert_{\Ms}=\vsig$.

Next, we consider the group homomorphism $\varphi:\Gs\rightarrow \Ns^{\circ}$ as defined in Proposition \ref{invphi}. We recall that $\varphi(\Gs)\subseteq \Ms^{\perp}$ and that $\ker(\varphi)=\Psi(S)$ where $S$ is a $\ks$-vector subspace of $\Js$ satisfying $C_{J^{2}}(\sig)\subseteq S$ and $\dim S\geq \dim \Js -1$. On the one hand, (i) follows because $$C_{\Gs}(\tet') = \ker(\varphi) = \Psi(S)$$ for all $\tet'\in \Ns^{\circ}$ such that $\tet'\vert_{\Ms} =\vsig$.

On the other hand, assume that $C_{\Gs}(\tet)\neq \Gs$, so that $\ker(\varphi)\neq \Gs$ and $S\neq \Js$. Let $x\in \Gs$ be such that $\varphi(x)\in \Ns^{\circ}$ is not trivial, and let $a\in \Js$ be such that $x=\Psi(a)$. Then, \refeq{eq2} implies that $\varphi(\Psi(\alpha a)) \in \varphi(\Gs) = \Ms^{\perp}$ for all $\alpha\in\ks$. Since $\Ms^{\perp}\cong (\Ns \slash \Ms)^{\circ}$ and since $$\Ns \slash \Ms\cong 1+(L \slash J^{m})\cong \ks$$ (see \refl{involutionisom}), it is straightforward to show that the mapping $\alpha\mapsto \varphi(\Psi(\alpha a))$ defines a isomorphism of groups $\ks \cong \Ms^{\perp}$. In particular, it follows that $$\Ms^{\perp} = \{\varphi(\Psi(\alpha a)) \colon \alpha\in\ks\},$$ and so the map $\map{\varphi}{\Gs}{\Ms^{\perp}}$ is surjective. Therefore, we see that there are isomorphisms of groups $$\Gs\slash C_{\Gs}(\tet) \cong \Ms^{\perp} \cong \ks.$$

To conclude the proof of (ii), let $\tet'\in \Ns^{\circ}$ be such that $\tet'\vert_{\Ms}=\vsig$, and consider the character $\tet'\tet\inv\in \Ns^{\circ}$. It is obvious that $\tet'\tet\inv\in \Ms^{\perp}$, and thus there exists $\alpha\in\k$ such that $\tet'\tet\inv = \varphi(\Psi(\alpha a))$. If we set $g = \Psi(\alpha a)\inv$, then
\begin{align*}
\tet'(x)\tet(x)\inv&=\vsig ([g\inv,x\inv])=\vsig(gxg\inv x\inv)\\
&=\tet(gxg\inv x\inv)=\tet(gxg\inv)\tet(x)
\end{align*}
and so $\tet'(x)=\tet(gxg\inv)$ for all $x\in \Ns$, as required.
\end{proof}

%%%%%%%%%%%%%%%%%%%%%%%%%%%%%%%%%%%%%%%%%%%%%%%%%%%%%%%%
%%%%%%%%%%%%%%%%%%%%%%%%%%%%%%%%%%%%%%%%%%%%%%%%%%%%%%%%

\section{Proof of \reft{main1}} \label{proof}

In this section, we complete the proof of \reft{main1}. Hence, we keep the notation of that theorem. In particular, $\k$ will be a non-archimedean local field $\k$ of characteristic different from $2$.

We start by recalling some general notions of the theory of smooth representations of algebra groups. Let $N$ be an arbitrary $\sig$-invariant algebra subgroup of $G$, and let $\WC$ be an arbitrary smooth $\CNs$-module. For every smooth character $\tet \in \Ns^{\circ}$, let $\WC(\tet)$ be the $\k$-linear span of set $\{xw-\tet(x)w \colon x \in \Ns, w \in \WC\}$, and consider the quotient $\WC_{\tet} = \WC\slash \WC(\tet)$. Therefore, $\WC_{\tet}$ is the largest quotient of $\WC$ where $\Ns$ acts via the character $\tet$. If $\VC$ is an arbitrary smooth $\CGs$-module, then $\VC$ is also a smooth $\CNs$-module, and thus we may consider the quotient $\VC_{\tet} = \VC\slash \VC(\tet)$. We define the \textit{spectral support of $\VC$ with respect to $\Ns$} to be the subset $$\spec_{\Ns}(\VC) = \{\tet \in \Ns^{\circ} \colon \VC_{\tet} \neq \{0\}\}$$ of $\Ns^{\circ}$. In the case where $N$ is a normal subgroup of $G$, then $\Ns$ is a normal subgroup of $\Gs$, and hence $\Gs$ acts by conjugation on $\Ns^{\circ}$. Then, $\VC_{\tet}$ is a smooth $\C[C_{\Gs}(\tet)]$-module which satisfies $x\overline{v} = \tet(x)\overline{v}$ for all $x \in \Ns$ and all $\overline{v} \in \VC_{\tet}$. In this situation, the following auxiliary result will be important for us. For any group $G$, we will denote by $\overline{[G,G]}$ the closure of the commutator subgroup $[G,G]$ of $G$.

\begin{lemma} \label{spec1}
Let $\VC$ be an arbitrary smooth $\CGs$-module, and let $N$ be a $\sig$-invariant normal subgroup of $G$. Then, $\spec_{\Ns}(\VC)$ is a $\Gs$-invariant subset of $\Ns^{\circ}$ and $$\VC_{0} = \bigcap_{\tet \in \spec_{\Ns}(\VC)} \VC(\tet)$$ is a $\CGs$-submodule of $\VC$. In particular, if $\VC$ is irreducible and $\spec_{\Ns}(\VC)$ is non-empty, then $\overline{[\Ns,\Ns]}$ acts trivially on $\VC$, so that $\VC$ becomes naturally as an irreducible smooth $\C\big[ \Gs \slash \overline{[\Ns,\Ns]}\big]$-module.
\end{lemma}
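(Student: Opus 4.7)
\textbf{Plan for \refl{spec1}.} The whole lemma rests on two simple observations: $\VC(\tet)$ is a $\CNs$-submodule of $\VC$, and the $\Gs$-action on $\VC$ permutes the family $\{\VC(\tet)\}_{\tet\in\Ns^{\circ}}$ in the same way that conjugation permutes characters.

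First I would establish the submodule property. For $y,x\in\Ns$ and $w\in\VC$, I rewrite $y(xw-\tet(x)w) = (yxy\inv)(yw)-\tet(x)(yw)$, and since $\tet(yxy\inv)=\tet(x)$ because $\cx$ is abelian, this lies in the span defining $\VC(\tet)$; hence $\VC(\tet)$ is $\Ns$-stable. Next I check the conjugation-equivariance: for $g\in\Gs$, the computation
\[
g(xw-\tet(x)w) = (gxg\inv)(gw)-\tet(g\inv(gxg\inv)g)(gw)
\]
shows that the action of $g$ on $\VC$ sends $\VC(\tet)$ bijectively onto $\VC(\tet^{g})$, where $\tet^{g}(y)=\tet(g\inv yg)$. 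Consequently $\VC_{\tet^{g}}\cong \VC_{\tet}$ as vector spaces, so $\tet\in\spec_{\Ns}(\VC)$ if and only if $\tet^{g}\in\spec_{\Ns}(\VC)$; this proves the $\Gs$-invariance claimed in the first assertion.

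For the submodule claim, $\VC_{0}$ is an intersection of $\CNs$-submodules and so is $\CNs$-stable. For a given $g\in\Gs$, the permutation $\tet\mapsto \tet^{g}$ preserves $\spec_{\Ns}(\VC)$ by the previous step, and $g\VC(\tet)=\VC(\tet^{g})$, so
\[
g\VC_{0}=g\bigcap_{\tet\in\spec_{\Ns}(\VC)}\VC(\tet)=\bigcap_{\tet\in\spec_{\Ns}(\VC)}\VC(\tet^{g})=\VC_{0},
\]
whence $\VC_{0}$ is a $\CGs$-submodule.

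For the final assertion, suppose $\VC$ is irreducible and $\spec_{\Ns}(\VC)$ is non-empty. Picking any $\tet\in\spec_{\Ns}(\VC)$ I have $\VC(\tet)\neq \VC$ (as $\VC_{\tet}\neq 0$), hence $\VC_{0}\subseteq\VC(\tet)\subsetneq\VC$; irreducibility forces $\VC_{0}=0$. Now for any $g\in[\Ns,\Ns]$ and any $\tet\in\spec_{\Ns}(\VC)$, the character $\tet$ satisfies $\tet(g)=1$, so $gw-w=gw-\tet(g)w\in\VC(\tet)$ for every $w\in\VC$ and every $\tet$ in the spectral support. Therefore $gw-w\in\VC_{0}=0$, i.e., $[\Ns,\Ns]$ acts trivially on $\VC$. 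Finally, for each $v\in\VC$ the stabiliser $\Gs_{v}$ is open by smoothness, hence also closed in $\Gs$; since it contains $[\Ns,\Ns]$, it contains $\overline{[\Ns,\Ns]}$. Thus $\overline{[\Ns,\Ns]}$ fixes every vector, and $\VC$ descends to an irreducible smooth module over $\C\bigl[\Gs\slash \overline{[\Ns,\Ns]}\bigr]$.

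No single step is genuinely hard; the only subtlety is keeping straight the convention for $\tet^{g}$ so that the equality $g\VC(\tet)=\VC(\tet^{g})$ comes out with the correct exponent, and remembering that open subgroups of topological groups are automatically closed in order to pass from $[\Ns,\Ns]$ to its closure.
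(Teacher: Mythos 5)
Your proof is correct and follows essentially the same route as the paper's: establish $g\VC(\tet)=\VC(\tet^{g})$ (equivalently the paper's $\VC(\tet^{g})=g^{-1}\VC(\tet)$, the discrepancy being only a choice of convention for $\tet^{g}$), deduce that $\VC_{0}$ is a proper $\CGs$-submodule and hence zero by irreducibility, and observe that commutators then act trivially since every $\tet$ in the spectral support kills $[\Ns,\Ns]$. Your passage from $[\Ns,\Ns]$ to $\overline{[\Ns,\Ns]}$ via openness of stabilisers is fine, though one could also note directly that each smooth character $\tet$ has open, hence closed, kernel and therefore already vanishes on $\overline{[\Ns,\Ns]}$.
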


\begin{proof}
For the first assertion, it is enough to observe that $\VC(\tet^{g}) = g^{-1} \VC(\tet)$ for all $\tet \in \Ns^{\circ}$ and all $g \in \Gs$. For the second assertion, we note that $\VC_{0} \neq \VC$, and so $\VC_{0} = \{0\}$. Therefore, the natural linear map $\VC \rightarrow \prod_{\tet \in \spec_{\Ns}(\VC)} \VC_{\tet}$ is injective, and thus $\overline{[\Ns,\Ns]}$ acts trivially on $\VC$.
\end{proof}

As a consequence of \cite[Corollaire~1 au Th\'eor\`eme~3]{Rodier1977a}, we obtain the following result.

\begin{lemma} \label{orbit2}
Let $N$ be a $\sig$-inariant normal subgroup of $G$, let $\VC$ be an irreducible smooth $\CGs$-module, and let $\tet\in \Ns^{\circ}$ be such that $\VC_{\tet} \neq \{0\}$. If the $\Gs$-orbit $\tet^{\Gs}$ is a locally closed subset of $\Ns^{\circ}$, then $\spec_{Q}(\VC) = \tet^{G}$. Moreover, $\VC_{\tet}$ is an irreducible smooth $C_{\Gs}(\tet)$-module and $\VC \cong \CInd^{\Gs}_{C_{\Gs}(\tet)}(\VC_{\tet})$.
\end{lemma}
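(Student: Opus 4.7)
The plan is to derive this lemma directly from \cite[Corollaire~1 au Th\'eor\`eme~3]{Rodier1977a} applied to the pair $(\Gs, \Ns)$. The cited corollary is precisely the Mackey--Rodier orbit decomposition: for a smooth irreducible representation of an $\ell$-group, whenever a character of a closed normal subgroup appears with locally closed orbit, the representation is compactly induced from the stabiliser of that character, with the $\tet$-isotypic quotient serving as an irreducible module for the stabiliser. My task therefore reduces to verifying that the hypotheses of that corollary transfer cleanly to the $\sig$-fixed setting.

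First I would check the topological setup. Since $G$ is an $\ell$-group and the involution $\sig$ acts continuously, $\Gs$ is closed in $G$ and is itself an $\ell$-group; since $N$ is $\sig$-invariant and normal in $G$, the subgroup $\Ns$ is closed and normal in $\Gs$, and $\Gs$ acts continuously on $\Ns^{\circ}$ by conjugation $\tet^{g}(h) = \tet(g\inv h g)$. The standing assumption that $\tet^{\Gs}$ is locally closed in $\Ns^{\circ}$ supplies the remaining input required by Rodier, and the nonvanishing of $\VC_{\tet}$ furnishes the character in the spectrum.

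Once the framework is in place, the corollary produces both the irreducibility of $\VC_{\tet}$ as a smooth $\C[C_{\Gs}(\tet)]$-module and the canonical isomorphism $\CInd^{\Gs}_{C_{\Gs}(\tet)}(\VC_{\tet}) \cong \VC$. The spectral support equality $\spec_{\Ns}(\VC) = \tet^{\Gs}$ then splits into two inclusions: $\tet^{\Gs} \subseteq \spec_{\Ns}(\VC)$ is immediate from \refl{spec1}, since the spectral support is $\Gs$-invariant and contains $\tet$; the reverse inclusion follows from the induction formula, because any $\tet' \in \spec_{\Ns}(\VC)$ gives a nonzero map from $\VC \cong \CInd^{\Gs}_{C_{\Gs}(\tet)}(\VC_{\tet})$ to a space on which $\Ns$ acts through $\tet'$, which by Frobenius reciprocity forces $\tet'$ to lie in the $\Gs$-orbit of $\tet$.

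The main (and only) obstacle is formal bookkeeping: verifying that Rodier's hypotheses hold for $\Gs$ and $\Ns$ in place of $G$ and $N$. All of this is straightforward from the continuity of $\sig$ and the fact that $G$ is a second countable $\ell_c$-group, so no genuinely new argument is needed beyond invoking the cited result.
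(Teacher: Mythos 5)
Your outline invokes the right cited result but skips the one nontrivial reduction that the paper actually needs. Rodier's Corollaire~1 (and~2) au Th\'eor\`eme~3 is a statement about the spectral decomposition of a smooth representation relative to a \emph{commutative} closed normal subgroup: the whole apparatus of characters, the dual $\widehat{N}$, and orbits therein presupposes that the normal subgroup is abelian. In the present situation $\Ns$ is typically not abelian, so one cannot apply Rodier's corollary ``directly to the pair $(\Gs,\Ns)$'' as you propose. The paper deals with this precisely via \refl{spec1}: since $\VC_{\tet}\neq\{0\}$, the spectral support is non-empty, so $\overline{[\Ns,\Ns]}$ acts trivially on $\VC$, and hence $\VC$ descends to an irreducible smooth module over $\Gs\slash\overline{[\Ns,\Ns]}$ with the \emph{abelian} normal subgroup $\Ns^{\ab}=\Ns\slash\overline{[\Ns,\Ns]}$. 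Only after this passage do Rodier's corollaries apply, with the identifications $(\Ns^{\ab})^{\circ}=\Ns^{\circ}$ and $C_{\Gs}(\tet)$ unchanged, giving the spectral support, the irreducibility of $\VC_{\tet}$, and the compact induction isomorphism. Your ``formal bookkeeping'' paragraph checks closedness and continuity but does not check commutativity, and commutativity is exactly the hypothesis that fails for $\Ns$ itself. The Frobenius-reciprocity argument for the reverse inclusion of the spectral support is fine, but it is superfluous once Rodier's Corollaire~1 is invoked correctly after abelianising.

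A secondary, smaller point: the paper uses Corollaire~1 for the spectral support and the irreducibility of $\VC_{\tet}$ and Corollaire~2 for the isomorphism $\VC\cong\CInd^{\Gs}_{C_{\Gs}(\tet)}(\VC_{\tet})$; your write-up attributes all three conclusions to Corollaire~1 alone. This does not change the mathematics, but when citing Rodier you should match the conclusions to the corollaries actually stating them.
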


\begin{proof}
We consider the abelianisation $\Ns^{\ab} = \Ns\slash \overline{[\Ns,\Ns]}$ of $\Ns$, and note that the group $\Gs$ acts naturally by conjugation on $\Ns^{\ab}$. By the previous lemma, $\VC$ has a structure of irreducible smooth $\C[\Ns^{\ab}]$-module. Moreover, $\tet$ may be identified with a smooth character of $\Ns^{\ab}$, and it is clear that the $\Gs$-orbit $\tet^{\Gs}$ is a locally closed subset of $(\Ns^{\ab})^{\circ} = \Ns^{\circ}$. Therefore, it follows from \cite[Corollaire~1 au Th\'eor\`eme~3]{Rodier1977a} that $\spec_{\Ns^{\ab}}(\VC) = \tet^{\Gs}$, and that $\VC_{\tet}$ is an irreducible smooth $C_{\Gs}(\tet)$-module. Furthermore, \cite[Corollaire~2 au Th\'eor\`eme~3]{Rodier1977a} implies that $\VC \cong \CInd^{\Gs}_{C_{\Gs}(\tet)}(\VC_{\tet})$, as required.
\end{proof}

We are now able to prove our main result.

\begin{proof}[Proof of Theorem \ref{main1}]
We proceed by induction on $\dim J$, the result being obvious if $\dim J =1$. Therefore, we assume that $\dim J \geq 2$, and that the result is true whenever $J'$ is a subalgebra of $J$ with $\dim J' \lneqq \dim J$.

Let $\VC$ be an arbitrary irreducible smooth $\CGs$-module. The case where $\dim \VC = 1$ is trivial, and thus we assume that $\dim \VC\geq 2$. As justified in \refs{step}, we may choose the smallest positive integer $m \geq 2$ for which there exists a $\Gs$-invariant smooth character $\vsig\in \Ms^{\circ}$, where $M = 1 + J^{m}$, such that $g\cdot v = \vsig(g)v$ for all $g \in M$ and all $v \in \VC$. Furthermore, there exists an ideal $L$ of $J$ satisfying $$J^{m} \subseteq L \subseteq J^{m-1},\quad \dim(C_{L}(\sig) \slash C_{J^{m}}(\sig))=1,\quad \text{and} \quad \vsig([\Gs,\Ns]) \neq 1$$ where $N = 1+L$.

Since we are assuming by induction that every irreducible smooth $\CNs$-module is admissible, it follows from \cite[Corollary 4.8]{Boyarchenko2011a} that $\Res^{\Gs}_{\Ns}(\VC)$ has an irreducible quotient $\VC'$. Since $[\Ns,\Ns]\subseteq \ker(\vsig)$ (by \refp{invextension}), Schur's lemma implies that $\VC'$ is one-dimensional, and so it affords a smooth character\linebreak $\tet\in \Ns^{\circ}$. We note that the extreme case where $m = 2$ and $\dim J = \dim J^{2} + 1$ cannot occur; indeed, in this situation, we must have $N = G$ (hence, $\Ns=\Gs$), and thus $\VC'= \VC$ which contradicts the assumption $\dim \VC \geq 2$.

Let $\VC_\tet$ is the largest quotient of $\VC$ where $\Ns$ acts via the character $\tet$. We note that $\VC'$ is a quotient of $\VC_{\tet}$, and thus  $\VC_{\tet} \neq 0$. On the other hand, we recall from \refp{invextension} that the $\Gs$-orbit $\tet^{\Gs} \subseteq \Ns^{\circ}$ of $\tet$ consists of all $\tet' \in \Ns^{\circ}$ which satisfy $\tet'\vert_{\Ms} = \vsig$. In particular, $\tet^{\Gs}$ is a closed subset of $\Ns^{\circ}$, and thus \refl{orbit2} assures that $\VC_\tet$ is an irreducible smooth $\C[C_{\Gs}(\tet)]$-module satisfying $$\VC \cong \CInd^{\Gs}_{C_{\Gs}(\tet)}(\VC_\tet).$$

Since $\Ms$ acts on $\VC$ via $\vsig$, we have $\tet\vert_{\Ms} = \vsig$, and so $C_{\Gs}(\tet) =\Psi(S)$ for some $\ks$-vector subspace $S$ of $\Js$ satisfying $$C_{J^{2}}(\sig) \subseteq S \quad \text{and} \quad \dim S \geq \dim \Js -1$$ (see \refp{invextension}). Since $\tet([\Gs,\Ns]) = \vsig([\Gs,\Ns]) \neq 1$, we have $C_{\Gs} (\tet) \neq \Gs$, and thus $\dim S \lneqq \dim \Js$ as $\ks$-vector spaces. It follows that $\dim S = \dim \Js -1$, and hence $\Js = \ks u \oplus S$ for some $u \in \Js$.

Let $\widehat{S}$ be the $\k$-vector subspace of $J$ spanned by $S$, and define $J_{0} = \widehat{S} + J^{2}$. It is clear that $J_{0}$ is a $\k$-subalgebra of $J$ with $\dim J_{0} = \dim J - 1$. Moreover, $C_{J_{0}}(\sig) = S$. In particular, $G_{0} = 1+J_{0}$ is a proper $\sig$-invariant algebra subgroup of $G$ satisfying $$C_{G_{0}}(\sig) = \Psi(S) = C_{\Gs}(\tet).$$ Therefore, by the induction hypothesis, there exist a $\sig$-invariant algebra subgroup $H$ of $G_{0}$ and a smooth character $\map{\vartheta}{\Hs}{\cx}$ such that $$\VC_{\tet} \cong \CInd^{C_{G_{0}}(\sig)}_{\Hs}(\C_{\vartheta}).$$ Furthermore, the smooth $\C[C_{G_{0}}(\sig)]$-module $\VC_\tet$ is admissible, and thus \cite[Section~5.6, pg.~42]{Vigneras1996a} implies that $$\CInd^{C_{G_{0}}(\sig)}_{\Hs} (\C_{\vartheta}) = \Ind^{C_{G_{0}}(\sig)}_{\Hs}(\C_{\vartheta}).$$ Finally, by transitivity of compact induction, we deduce that $$\VC \cong \CInd^{\Gs}_{C_{G_{0}}(\sig)}(\VC_{\tet}) \cong \CInd^{\Gs}_{C_{G_{0}}(\sig)} \big(\CInd^{C_{G_{0}}(\sig)}_{\Hs}(\C_{\vartheta}) \big) \cong \CInd^{\Gs}_{\Hs}(\C_{\vartheta}).$$

In order to complete the proof, it remains to show that $\VC$ is admissible. However, the admissibility of $$\VC \cong \CInd^{\Gs}_{C_{\Gs}(\tet)}(\VC_{\tet})$$ follows at once from \cite[Th\'eor\`eme~4]{Rodier1977a}, because the $\Gs$-orbit $\tet^{\Gs}$ is closed in $\Ns^{\circ}$ (by \refp{invextension}) and because the smooth $\C[C_{G_{0}}(\sig)]$-module $\VC_\tet$ is admissible. Finally, \cite[Section~5.6, pg.~42]{Vigneras1996a} assures that $$\CInd^{\Gs}_{\Hs} (\C_{\vartheta}) = \Ind^{\Gs}_{\Hs}(\C_{\vartheta}),$$ as required.
\end{proof}

\end{document}